\documentclass[a4paper, 12pt]{article}
\pdfoutput=1 

\usepackage[utf8]{inputenc}
\usepackage[T1]{fontenc}
\usepackage[a4paper, margin=2.5cm]{geometry}
\usepackage{needspace}
\usepackage{libertine} 
\usepackage{inconsolata} 

\usepackage{amsmath, amsthm, amssymb,mathtools}
\usepackage{graphicx}
\usepackage{enumerate}
\usepackage[shortlabels]{enumitem}
\usepackage{authblk}
\setlength{\marginparwidth}{2cm}
\usepackage{thmtools}
\usepackage{thm-restate}
\usepackage{authblk}
\usepackage[hyphens]{xurl}
\usepackage[colorlinks=true, linkcolor = bordeaux, citecolor = darkblue, urlcolor = darkblue]{hyperref}
\usepackage{float}
\usepackage[normalem]{ulem}
\usepackage{xcolor}
\usepackage[ruled,vlined]{algorithm2e}
\usepackage{textpos}
\usepackage{soul}

\usepackage{comment}
\usepackage{placeins}
\usepackage{pdfpages}

\usepackage{pgf}
\usepackage{pgffor}
\usepackage{xspace}
\usepackage{tikz}
\usetikzlibrary{fit}
\usetikzlibrary{arrows}
\usetikzlibrary{patterns}
\usetikzlibrary{calc}
\usetikzlibrary{shapes}
\usetikzlibrary{positioning}
\usetikzlibrary{math}
\usetikzlibrary{backgrounds}

\pgfdeclarelayer{background}
\pgfdeclarelayer{foreground}
\pgfsetlayers{background,main,foreground}

\renewenvironment{abstract}
{\small\vspace{-1em}
\begin{center}
\bfseries\abstractname\vspace{-.5em}\vspace{0pt}
\end{center}
\list{}{
\setlength{\leftmargin}{0.6in}%
\setlength{\rightmargin}{\leftmargin}}%
\item\relax}
{\endlist}

\declaretheorem[name=Theorem]{theorem}

\declaretheorem[name=Lemma, numberwithin = section]{lemma}

\declaretheorem[name=Definition, sibling=lemma]{definition}

\declaretheorem[name=Conjecture, sibling=lemma]{conjecture}

\declaretheorem[name=Claim, numberwithin=lemma]{claim}

\declaretheorem[name=Question, sibling=theorem]{question}

\def\cqedsymbol{\ifmmode$\lrcorner$\else{\unskip\nobreak\hfil
\penalty50\hskip1em\null\nobreak\hfil$\lrcorner$
\parfillskip=0pt\finalhyphendemerits=0\endgraf}\fi}


\interfootnotelinepenalty=10000

\newcommand{\say}[1]{``#1''} 
 %
\def\B{\mathcal{B}} %
 %
 %

\def\T{\mathcal{T}} 

 %

\thickmuskip=5mu plus 1mu minus 2mu

\makeatletter
\newcommand{\leqnomode}{\tagsleft@true}

\newcommand{\reqnomode}{\tagsleft@false}
\makeatother

\definecolor{CornflowerBlue}{rgb}{0.39, 0.58, 0.93}
\definecolor{DarkGoldenrod}{rgb}{0.72, 0.53, 0.04}
\definecolor{BritishRacingGreen}{rgb}{0.0, 0.26, 0.15}
\definecolor{DarkMagenta}{rgb}{0.55, 0.0, 0.55}
\definecolor{AO}{rgb}{0.0, 0.5, 0.0}
\definecolor{BostonUniversityRed}{rgb}{0.8, 0.0, 0.0}
\definecolor{myRed}{rgb}{0.8, 0.0, 0.0}
\definecolor{DarkMidnightBlue}{rgb}{0.0, 0.2, 0.4}
\definecolor{DarkTangerine}{rgb}{1.0, 0.66, 0.07}
\definecolor{AppleGreen}{rgb}{0.55, 0.71, 0.0}
\definecolor{BrightUbe}{rgb}{0.82, 0.62, 0.91}
\definecolor{Amethyst}{rgb}{0.6, 0.4, 0.8}
\definecolor{DarkGray}{rgb}{0.52, 0.52, 0.51}
\definecolor{Gray}{rgb}{0.66, 0.66, 0.66}
\definecolor{BananaYellow}{rgb}{1.0, 0.88, 0.21}
\definecolor{Amber}{rgb}{1.0, 0.75, 0.0}
\definecolor{LightGray}{rgb}{0.83, 0.83, 0.83}
\definecolor{PrOrange}{rgb}{1.0, 0.56, 0.0}
\definecolor{DeepCarrotOrange}{rgb}{0.91, 0.41, 0.17}
\definecolor{cobalt}{RGB}{0,71,171}
\definecolor{brightpink}{RGB}{255,0,204} 
\definecolor{bordeaux}{RGB}{100,0,50}
\definecolor{olivegreen}{RGB}{107, 142, 35}
\definecolor{forestgreen}{RGB}{34, 139, 34}
\definecolor{darkgreen}{RGB}{0, 100, 0}
\definecolor{darkblue}{RGB}{25, 25, 112}
\definecolor{grun}{rgb}{0, 0.5, 0.5}
\definecolor{violet}{RGB}{177, 0.5, 255}




\definecolor{meikeColour}{rgb}{0.41, 0.16, 0.38}

\definecolor{pabloColor}{rgb}{0.30,0.65,1.00}

\DeclareMathOperator{\tw}{tw}
\DeclareMathOperator{\pw}{pw}
\renewcommand{\leq}{\leqslant}
\renewcommand{\geq}{\geqslant}

\renewcommand{\ge}{\geqslant}
\renewcommand{\emptyset}{\varnothing}
\DeclarePairedDelimiter{\abs}{\lvert}{\rvert}
\DeclarePairedDelimiter{\set}{\{}{\}}
\usepackage[noabbrev,capitalise,nameinlink]{cleveref}
\crefname{claim}{Claim}{Claims}
\crefname{lemma}{Lemma}{Lemmas}
\crefname{theorem}{Theorem}{Theorems}
\crefname{proposition}{Proposition}{Propositions}
\crefname{question}{Question}{Questions}
\crefname{definition}{Definition}{Definitions}
\crefname{conjecture}{Conjecture}{Conjectures}
\crefname{corollary}{Corollary}{Corollaries}
\crefformat{equation}{(#2#1#3)}
\Crefformat{equation}{Equation #2(#1)#3}
\setlist[itemize]{topsep=0ex,itemsep=0ex,parsep=0.25ex}
\setlist[enumerate]{topsep=0ex,itemsep=0ex,parsep=0.25ex}
\definecolor{tikzgray}{gray}{0.6}
\definecolor{tikzblue}{RGB}{50, 50, 224}
\definecolor{tikzred}{RGB}{179, 0, 0}
\tikzset{MyNode/.style={circle, draw, inner sep=2,outer sep=0, fill=tikzgray}}

\let\OLDthebibliography\thebibliography
\renewcommand\thebibliography[1]{
  \OLDthebibliography{#1}
  \setlength{\parskip}{2pt plus 0.3ex minus 0.3ex}
  \setlength{\itemsep}{4pt plus 0.3ex minus 0.3ex}
}

\title{On tree decompositions whose trees are minors}

\author[1]{Pablo Blanco} 
\author[2]{Linda Cook\thanks{Supported by the Institute for Basic Science (IBS-R029-C1).}}
\author[3]{Meike Hatzel\thanks{Supported by the Federal Ministry of Education and
Research (BMBF) and by a fellowship within the IFI programme of the German Academic Exchange Service (DAAD).}}
\author[4]{Claire Hilaire}
\author[5]{Freddie Illingworth\thanks{Supported by EPSRC grant EP/V007327/1.}}
\author[1]{Rose McCarty\thanks{Supported by the National Science Foundation under Grant No.\ DMS-2202961.}}

\affil[1]{Department of Mathematics, Princeton University, United States}
\affil[2]{Discrete Math Group, Institute for Basic Science, Daejeon, Republic of Korea}
\affil[3]{National Institute of Informatics, Tokyo, Japan}
\affil[4]{LaBRI, Université de Bordeaux, Bordeaux, France}
\affil[5]{Mathematical Institute, University of Oxford, United Kingdom}

\date{February 23, 2023}

\begin{document}
\maketitle

\begin{textblock}{20}(-2.3, 6.5)
   \includegraphics[width=80px]{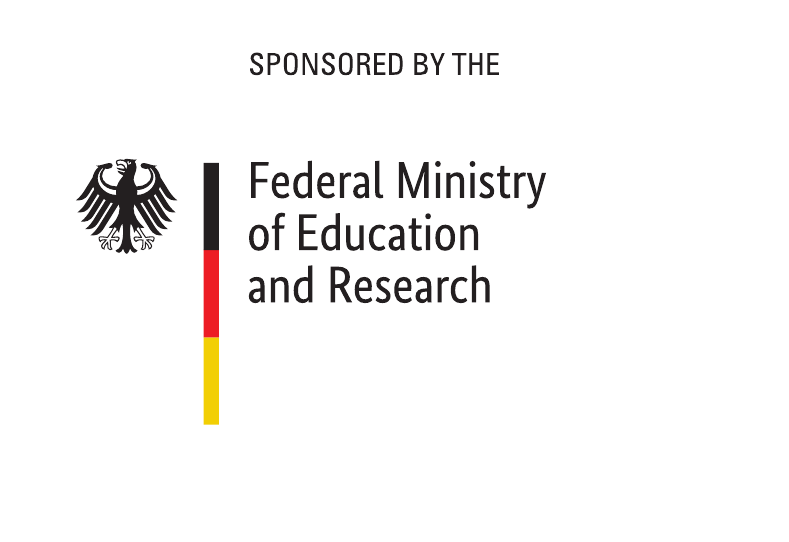}%
\end{textblock}

\begin{abstract}
In 2019, Dvo\v{r}\'{a}k asked whether every connected graph $G$ has a tree decomposition $(T, \B)$ so that $T$ is a subgraph of $G$ and the width of $(T, \B)$ is bounded by a function of the treewidth of $G$.
We prove that this is false, even when $G$ has treewidth $2$ and $T$ is allowed to be a minor of $G$.
\end{abstract}

\section{Introduction}

Suppose that a graph $G$ has small treewidth, and consider all tree decompositions $(T, \B)$ of $G$ whose width is not too much larger than the optimum. To what extent can we choose or manipulate the ``shape'' of $T$?
For graphs with no long path, we can choose $T$ to also have no long path~\cite{sparsity2012}\footnote{The reference gives us an elimination tree $F$ of $G$ of depth $k$. Then we can obtain a tree decomposition $(F, \B)$ of $G$ of width $k$ by letting the bag of each vertex $v \in V(F)$ be the set of all ancestors of $v$ in $F$.}; this gives rise to the parameter called \emph{treedepth}. 
Similarly, for graphs of bounded degree, we can choose $T$ to also have bounded degree~\cite{DingOporowski95}; this relates to the parameters of \emph{congestion} and \emph{dilation}.
Moreover, for graphs excluding any tree as a minor, we can choose $T$ to just be a path~\cite{bienstock1991}; this results in the parameter called \emph{pathwidth}.

It would be wonderful if we could unify all such results into a single theorem which relates the shape of $T$ to $G$. In 2019, Dvo\v{r}\'{a}k suggested one way of accomplishing this goal. In the question below and throughout the paper, we write $\tw(G)$ for the treewidth of~$G$.

\begin{question}[\cite{ZDconjecture}]\label{Dvorak}
    Does there exist a polynomial $P$ such that every connected graph $G$ has a tree decomposition $(T, \B)$ of width at most $P(\tw(G))$ such that $T$ is a subgraph of $G$?
\end{question}

\noindent Unfortunately, we prove that the answer to~\cref{Dvorak} is \say{no} in the following strong sense.

\begin{restatable}{theorem}{mainThm}\label{thm:main}
For every positive integer $k$, there is a connected graph $G$ of treewidth~$2$ such that if $(T, \B)$ is a tree decomposition of $G$ and $T$ is a minor of $G$, then $(T, \B)$ has width at least~$k$.
\end{restatable}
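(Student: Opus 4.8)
The plan is to construct, for each $k$, a graph $G_k$ of treewidth $2$ on astronomically many vertices whose every ``large'' tree minor fails to support a narrow tree decomposition, and to combine this with the following elementary counting bound. If $(T,\B)$ is any tree decomposition of a graph $G$ of width $w$, then, since every vertex of $G$ lies in at least one bag,
\[
|V(G)| \;=\; \Bigl\lvert \bigcup_{t\in V(T)} B_t \Bigr\rvert \;\le\; \sum_{t\in V(T)} |B_t| \;\le\; (w+1)\,|V(T)| ,
\]
so $|V(T)| \ge |V(G)|/(w+1)$. Hence a tree decomposition of $G_k$ of width less than $k$ must be carried by a tree $T$ with more than $|V(G_k)|/k$ vertices; when $T$ is moreover required to be a minor of $G_k$, this is a strong constraint, and the crux of the proof is to show that it cannot be met together with narrow bags.

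For $G_k$ I would use a recursion tuned so that a narrow decomposition is \emph{forced to branch}, but branching is expensive. Let $G_1$ be a long cycle. Given $G_{k-1}$ (with two distinguished ``port'' vertices), build $G_k$ from a huge number $N$ of disjoint copies of $G_{k-1}$, wired to a level-$k$ gadget in a rigid parallel-then-series fashion chosen so that: (i) $\tw(G_k)=2$, the ports of $G_k$ lie in a common bag of a width-$2$ decomposition, and $|V(G_k)|$ dwarfs $k\cdot|V(G_{k-1})|$; and (ii) if $(T,\B)$ is a tree decomposition of $G_k$ with $T$ a minor of $G_k$ and $|V(T)|>|V(G_k)|/k$, then in all but boundedly many of the $N$ copies the restriction of $(T,\B)$ is essentially a tree decomposition of that copy whose tree is a large minor of it, \emph{and} the level-$k$ gadget contributes one further edge of $G_k$ that $(T,\B)$ is forced to route across the same node of $T$ at which the level-$(k-1)$ obstruction already lives. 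Iterating down the $k$ levels produces a node $t_0$ of $T$ and $k$ pairwise vertex-disjoint edges of $G_k$, each of which $(T,\B)$ must carry across $t_0$; these contribute $k$ distinct vertices to $B_{t_0}$, so $|B_{t_0}|\ge k+1$ and the width is at least $k$, contradicting our assumption.

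The real obstacle is part (ii): a minor model of $T$ is not obliged to respect the recursive $2$-separators of $G_k$ at all. It may contract a huge connected chunk of $G_k$ into a single branch set, smear a single copy of $G_{k-1}$ across many branch sets of $T$, or swallow the level-$k$ crossing edge inside a branch set so that it costs nothing in the decomposition. The level-$k$ gadget must therefore be engineered so that each such evasion is self-defeating: any branch set large enough to hide the crossing makes $|V(T)|$ drop below the $|V(G_k)|/k$ threshold (via the counting bound applied one level down), and any re-routing of $T$ that untangles the crossings at $t_0$ either creates a too-large bag elsewhere or again shrinks $T$. Formalising this -- proving that every sufficiently large tree minor of $G_k$ inherits the obstruction of $G_{k-1}$ with one extra nested crossing -- is the technical heart, and is naturally set up as an induction on $k$ carrying a potential that balances the number of vertices of $T$ against the number of nested crossings already secured at a common node.
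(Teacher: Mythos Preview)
Your proposal is a strategy outline, not a proof: the step you yourself label ``the technical heart'' --- showing that every sufficiently large tree minor of $G_k$ inherits the level-$(k-1)$ obstruction plus one more nested crossing at the \emph{same} node --- is asserted but not carried out. You correctly identify the obstacle (a minor model need not respect the recursive $2$-separators; branch sets may swallow the crossing edge, smear a copy of $G_{k-1}$ over many nodes, etc.), and you gesture at a potential-function induction that would rule out each evasion, but you neither specify the level-$k$ gadget concretely nor prove any of the claimed trade-offs. Without that, the argument has no content beyond the (correct but easy) counting bound $|V(T)|\ge |V(G)|/(w+1)$. There is also a subtler worry: your plan needs the nested crossings to accumulate at a \emph{single} node $t_0$ of $T$, but the inductive hypothesis you sketch only pins down such a node inside one copy of $G_{k-1}$, and you have not said why the new level-$k$ edge must route through that particular node rather than elsewhere on the path it determines in $T$.

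The paper avoids this swamp entirely by two reductions that strip away the freedom of the minor model before any obstruction is built. First (\cref{lem:minor-to-subgraph-tree}), if $T$ is a minor of a connected $G$ then one can expand each branch set to get a spanning tree $T'$ carrying a decomposition of the same width; this is the clean version of your counting step. The real move is the second reduction (\cref{lem:reduction}): to each vertex $a_j$ of the target graph one attaches a complete $w_j$-ary tree $S_j$ of height $h_j$, with the $h_j$ rapidly increasing and the $w_j$ rapidly decreasing, so that no two attached trees admit comparable width-$k$ decompositions. A pigeonhole on bag sizes then forces, in any width-$k$ spanning-tree decomposition of the enlarged graph $\widetilde{G}$, the subtree $T_{a_j}$ to live inside $S_j$ (or to contain $a_j$ itself). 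Restricting to $V(G)$ yields a decomposition where $T'$ is a spanning tree of $G$ \emph{and} $v\in T'_v$ for every $v$. Once you have $v\in T'_v$, the fundamental-cycle argument on the reflected-trees $G_r$ is short and clean: a matching of $r-1$ non-tree edges whose fundamental cycles share a vertex $x$ forces $|B_x|\ge r-1$.

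So the paper's proof and your plan diverge precisely at the hard point. Rather than fighting the minor model inside a single recursive graph, the paper \emph{rigidifies} the problem first (via $\widetilde{G}$) so that the remaining combinatorics is about spanning trees with $v\in T_v$, where fundamental cycles behave tamely. If you want to rescue your direct approach, you would need to actually construct the gadget and prove the potential-function induction; as written, that is the missing idea.
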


\noindent Intriguingly, in our proof of \cref{thm:main}, it seems crucial that the constructed graphs contain all trees as minors; perhaps \cref{Dvorak} could be true when $\tw(G)$ is replaced by $\pw(G)$, the pathwidth of~$G$. 
In other words, perhaps there exists a polynomial (or even just some function) $P$ so that every connected graph $G$ has a tree decomposition $(T, \B)$ of width at most $P(\pw(G))$ such that $T$ is a subgraph of $G$. 
We leave this as an open problem.

There has been strong interest in obtaining good bounds for treedepth \cite{td2021, tdpw2023, KawarabayashiRossman}, pathwidth \cite{pathwidth21}, and treewidth \cite{chekuriChuzhow2016, bestExcludedGrid} as a function of the natural obstructions (which are paths, trees, and grids, respectively\footnote{Formally, a class of graphs has bounded treedepth/pathwidth/treewidth if and only if it does not contain all paths/trees/grids as minors, respectively. See~\cite{sparsity2012}, \cite{bienstock1991}, and \cite{graphMinors4} for the respective proofs. Note that sometimes the obstructions are considered as subgraphs or subdivisions rather than minors. This occurs when the two definitions are equivalent, for instance when considering paths as minors (or equivalently as subgraphs).}).
These problems were in large part motivated by the desire to obtain better approximation algorithms and better win-win algorithms based on the obstructions.
An affirmative answer to \cref{Dvorak} would have unified these approaches, but unfortunately \cref{thm:main} shows that this is not possible.

There has also been recent interest in finding the $2$-connected obstructions for treedepth \cite{treedepth2Conn} and pathwidth~\cite{pathwidth2ConnDT, pathwidth2Conn} in $2$-connected graphs.
It seems unlikely that requiring $G$ to be $2$-connected would change the answer to \cref{Dvorak}, but the graphs we construct for \cref{thm:main} are not $2$-connected, thus leaving this as an open possibility.

We present a self-contained proof of \cref{thm:main}, however some steps were discovered independently by Hickingbotham~\cite{hickingbothamThesis}. In particular, Hickingbotham~\cite[Lemma 7.2.1]{hickingbothamThesis} noticed that it is just as hard to ensure $T$ is a subgraph of $G$ in \cref{Dvorak} as it is to ensure $T$ is a minor of $G$.
Thus our main contribution is \cref{lem:reduction}, which essentially shows that we can also force each vertex of $T$ to be in its own bag. Hickingbotham~\cite[Theorem~7.5.1]{hickingbothamThesis} already proved that this stronger condition can blow up the width. Moreover, Hickingbotham proved some positive results, including that the answer to \cref{Dvorak} is \say{yes} if $G$ is an outerplanar graph~\cite[Theorem~7.3.3]{hickingbothamThesis}. Note that outerplanar graphs are the graphs with simple treewidth at most $2$~\cite{KU12}, and so in this sense \cref{thm:main} is optimal.

We outline our approach to proving \cref{thm:main} in more detail in the next section.

\section{Preliminaries}
We use the following \say{subtree view} of tree decompositions. Recall that a \emph{subtree} of a graph $G$ is any subgraph of $G$ which is connected and acyclic.

\begin{definition}\label{def:td-subtree-version}
    Let $G$ be a graph, let $T$ be a tree, and let $\B = \set{B_x \colon x \in V(T)}$ be a family of subsets of $V(G)$ indexed by the vertices of $T$.
    For each vertex $v$ of $G$, we define
    \begin{equation*}
        T_v \coloneqq T[\set{x \colon v \in B_x}].
    \end{equation*}
    Then $(T, \B)$ is a \emph{tree decomposition} of $G$ if and only if the following conditions both hold.
    \begin{itemize}
        \item Each $T_v$ is a non-empty subtree of $T$.
        \item If $uv \in E(G)$, then $V(T_u) \cap V(T_v) \neq \emptyset$.
    \end{itemize}
\end{definition}

We use this notation $T_v$ throughout the paper. When there is no chance for confusion, we refer to $T_v$ and its vertex set $V(T_v)$ interchangeably. The \emph{width} of $(T, \B)$ is then the maximum, over all $x \in V(T)$, of $\abs{\set{v \in V(G) \colon x \in T_v}}-1$. The \emph{treewidth} of $G$ is the minimum width of a tree decomposition of $G$. Note that, if we are given a tree $T$ and a collection $(T_v \colon v \in V(G))$ of subtrees of $T$ which satisfy the conditions from \cref{def:td-subtree-version}, then we can define a tree decomposition $(T, \B)$ of $G$ by setting $B_x \coloneqq \set{v \in V(G) \colon x \in T_v}$ for each $x \in V(T)$.

We now outline our overall strategy for proving \cref{thm:main}. This theorem equivalently says that \cref{conj:minor} below is false, even for connected graphs of treewidth~$2$. We disprove \cref{conj:minor} by reducing each of the three conjectures below to the next one, and then disproving the final conjecture. Afterwards, we evaluate the treewidth of the constructed counterexamples more carefully. Note that in \cref{conj:strong}, the condition \say{for every vertex $v$ of $G$, $v \in T_v$} is equivalent to \say{for every vertex $x$ of $T$, $x \in B_x$}.
\begin{conjecture}\label{conj:minor}
    There is a function $f$ such that every connected graph $G$ has a tree decomposition $(T, \B)$ of width at most $f(\tw(G))$ such that $T$ is a minor of $G$.
\end{conjecture}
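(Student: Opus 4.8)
The plan is to attempt \cref{conj:minor} directly and to pin down the obstruction that such an attempt runs into. Fix a connected graph $G$ with $\tw(G) = t$ together with a tree decomposition $(S, \C)$ of $G$ of width $t$. By the usual normalisations we may assume that $\abs{V(S)} \le \abs{V(G)}$ and that $S$ has maximum degree at most $3$, without increasing the width. The goal is then to transform $(S, \C)$ into a tree decomposition $(T, \B)$ of $G$ of width bounded in terms of $t$ in which $T$ is a minor of $G$.

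A first observation is that one can always produce a tree decomposition whose tree is even a spanning \emph{subgraph} of $G$ --- but with no control on the width. Take any spanning tree $R$ of $G$, and for each $x \in V(R)$ let $B_x$ consist of $x$, the $R$-neighbours of $x$, and, for every edge $uv \in E(G) \setminus E(R)$ whose unique $R$-path passes through $x$, both endpoints $u$ and $v$. Then each $T_v$ is a union of $R$-paths emanating from $v$, hence a subtree of $R$, and every edge of $G$ is covered --- tree edges through the neighbour part, non-tree edges along their $R$-path --- so $(R, \B)$ is a tree decomposition of $G$. The trouble is that $\abs{B_x}$ is bounded only in terms of the degree of $x$ in $R$ and the number of non-tree edges routed across $x$, neither of which is controlled by any function of $t$. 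So the real content of \cref{conj:minor} is to choose the minor model and the bags \emph{together} so that these quantities stay small.

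The natural way to try this is to make the branch sets of the minor model follow the shape of $(S, \C)$: root $S$, and for each node $s$ choose a connected branch set $M_s \subseteq V(G)$ that represents the bag $C_s$, attached to the branch set of the parent of $s$ through a few vertices of the corresponding adhesion, and then index the decomposition by the resulting tree $T$ --- automatically a minor of $G$ --- with the bags built from the $C_s$'s. The step I expect to be the genuine obstacle is exactly this simultaneous control: there is no reason the trees that $G$ actually realises as minors should resemble the index trees of its tree decompositions. When $G$ has treewidth only $2$ yet nonetheless contains every sufficiently large tree as a minor, any tree $T$ that embeds in $G$ as a minor is forced to be ``tangled'' in a way incompatible with keeping all bags small, and the width blows up. I expect this obstruction to be fatal rather than surmountable, in which case the right move is not to prove \cref{conj:minor} but to exploit the phenomenon: first reduce \cref{conj:minor} to the rigid variant forcing $T$ to be a subgraph of $G$ with $v \in T_v$ for every vertex $v$, and then, for each positive integer $k$, construct a connected treewidth-$2$ graph containing all trees up to a $k$-dependent size as minors in which every minor-indexed tree decomposition has a bag of size at least $k$. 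That is exactly \cref{thm:main}, which refutes \cref{conj:minor}.
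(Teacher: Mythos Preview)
Your proposal is not a proof but a narrative that eventually lands on the correct conclusion: \cref{conj:minor} is \emph{false}, and the paper's content is its refutation via \cref{thm:main}. The first two thirds of your write-up --- attempting to build a minor model tracking an optimal decomposition $(S,\C)$ --- are misdirected, since no such construction can exist; you recognise this yourself, but the discussion does not contribute to the argument and should be discarded.

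Your final paragraph correctly sketches the paper's two-step strategy: reduce \cref{conj:minor} to the rigid variant (\cref{conj:strong}: $T$ a spanning tree with $v\in T_v$), then exhibit a treewidth-$2$ counterexample. However, you supply neither step, and both carry real content. The reduction is not routine: going from ``$T$ a minor'' to ``$T$ a spanning tree'' is easy (\cref{lem:minor-to-subgraph-tree}), but forcing $v\in T_v$ is the paper's main technical contribution (\cref{lem:reduction}). It requires the $\widetilde{G}$ construction of \cref{dfn:construction}, attaching to each vertex $a_j$ a complete $w_j$-ary tree $S_j$ of height $h_j$ with the parameters $h_1\ll\cdots\ll h_n$ and $w_n\ll\cdots\ll w_1$ tuned against one another so that, in any width-$k$ spanning-tree decomposition of $\widetilde{G}$, the subtree $T_{a_j}$ is trapped inside $S_j$ whenever it misses $a_j$. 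Nothing in your proposal suggests this mechanism. For the counterexample to \cref{conj:strong}, your phrase ``containing all trees up to a $k$-dependent size as minors'' captures an intuition the paper also records, but it is not the working principle of the construction: the reflected-trees $G_r$ succeed because every spanning tree admits a large matching of non-tree edges whose fundamental cycles share a vertex (\cref{lem:refl-tree-intersect}), which together with $v\in T_v$ forces a large bag. You would need to produce both the graph and this argument.
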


\begin{conjecture}\label{conj:spanning}
    There is a function $f$ such that every connected graph $G$ has a tree decomposition $(T, \B)$ of width at most $f(\tw(G))$ such that $T$ is a spanning tree of $G$.
\end{conjecture}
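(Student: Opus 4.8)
The plan is to show that \cref{conj:minor} implies \cref{conj:spanning}; together with the immediate converse (a spanning tree is in particular a minor), this yields the first reduction in the chain. So suppose \cref{conj:minor} holds with a function $f$, let $G$ be a connected graph, and fix a tree decomposition $(T,\B)$ of $G$ with $T$ a minor of $G$ and of width $w \leq f(\tw(G))$. The idea is to ``un-contract'' a minor model of $T$ inside $G$: replace each branch node by a spanning tree of its branch set to assemble a spanning tree $T'$ of $G$, and let every vertex in a branch set $G_x$ inherit exactly the bag membership that the node $x$ had.

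First I would fix a minor model of $T$ in $G$: pairwise disjoint sets $(G_x \colon x \in V(T))$ with each $G[G_x]$ connected, together with one edge $e_{xy} \in E(G)$ between $G_x$ and $G_y$ for each $xy \in E(T)$. Using that $G$ is connected, I would enlarge the branch sets so that they partition $V(G)$: while some vertex lies outside $\bigcup_{x} G_x$, connectedness supplies such a vertex with a neighbour inside $\bigcup_{x} G_x$, and we add it to that branch set; this preserves connectedness of every branch set and validity of every $e_{xy}$. Now pick a spanning tree $S_x$ of $G[G_x]$ for each $x$ and set
\[ T' \coloneqq \bigcup_{x \in V(T)} S_x \cup \set{e_{xy} \colon xy \in E(T)}. \]
Contracting each branch set in $T'$ recovers $T$, so $T'$ is connected; and $T'$ has $\sum_{x} (\abs{G_x}-1) + \abs{E(T)} = \abs{V(G)}-1$ edges, so $T'$ is a spanning tree of $G$.

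Next I would transport the decomposition along the model. For $v \in V(G)$, recall $T_v = T[\set{x \colon v \in B_x}]$ is a non-empty subtree of $T$, and put
\[ T'_v \coloneqq \bigcup_{x \in V(T_v)} S_x \cup \set{e_{xy} \colon xy \in E(T_v)}, \]
a subgraph of $T'$ whose vertex set is $\bigcup_{x \in V(T_v)} G_x$. Since $T_v$ is connected, the edges $e_{xy}$ with $xy \in E(T_v)$ glue the trees $S_x$ with $x \in V(T_v)$ into a connected subgraph, so each $T'_v$ is a non-empty subtree of $T'$. If $uv \in E(G)$, then since $(T,\B)$ is a tree decomposition there is some $x^{*} \in V(T_u) \cap V(T_v)$, and then $\emptyset \neq G_{x^{*}} = V(S_{x^{*}}) \subseteq V(T'_u) \cap V(T'_v)$. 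Hence, by \cref{def:td-subtree-version}, $(T', \B')$ with $B'_y \coloneqq \set{v \colon y \in V(T'_v)}$ is a tree decomposition of $G$ whose tree $T'$ is a spanning tree. Finally, for $y \in V(G)$ with $y \in G_{x_0}$ we have $y \in V(T'_v)$ exactly when $x_0 \in V(T_v)$, i.e.\ when $v \in B_{x_0}$; thus $B'_y = B_{x_0}$, so the width of $(T',\B')$ equals $w \leq f(\tw(G))$, establishing \cref{conj:spanning} with the same function $f$.

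The construction is short, so there is no single hard obstacle; the points to be careful about both concern the word ``spanning''. One must enlarge the minor model to cover all of $V(G)$ — otherwise $T'$ spans only the part of $G$ touched by the original model — and one must check that blowing up a subtree $T_v$ of $T$ along the model again yields a \emph{connected} subgraph of $T'$; both rely on $G$ and the branch sets being connected. The width bound then comes for free, since every vertex of $G$ simply inherits the bag membership of the branch node whose branch set contains it.
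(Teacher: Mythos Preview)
Your proposal is correct and follows essentially the same approach as the paper's \cref{lem:minor-to-subgraph-tree}: fix a minor model of $T$ in $G$, enlarge the branch sets to cover all of $V(G)$ using connectedness, take a spanning tree inside each branch set together with the model edges to form the spanning tree $T'$, and let each vertex inherit the bag of its branch node so that the width is unchanged. Your write-up is in fact more explicit than the paper's on two points the paper glosses over---how exactly the branch sets are enlarged, and the edge count verifying that $T'$ is a tree---but the argument is the same.
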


\begin{conjecture}\label{conj:strong}
    There is a function $f$ such that every connected graph $G$ has a tree decomposition $(T, \B)$ of width at most $f(\tw(G))$ such that $T$ is a spanning tree of $G$ and, for every vertex $v$ of $G$, we have $v \in T_v$.
\end{conjecture}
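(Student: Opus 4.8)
The plan is to \emph{refute} \cref{conj:strong}; combined with the two implications to be proved next, namely \cref{conj:minor} $\Rightarrow$ \cref{conj:spanning} $\Rightarrow$ \cref{conj:strong} --- each obtained by applying the hypothesis (which ranges over \emph{all} connected graphs) to a suitably modified host graph --- this immediately gives \cref{thm:main}. So the real target is: for every positive integer $k$, build a connected graph $G_k$ with $\tw(G_k) = 2$ that admits \emph{no} tree decomposition $(T, \B)$ of width less than $k$ in which $T$ is a spanning tree of $G_k$ and $v \in T_v$ for every vertex $v$.

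First I would pin down what the ``strong'' hypothesis forces. Let $(T, \B)$ be such a decomposition of a graph $G$; as $T$ is spanning, $v \in T_v$ just means $v \in B_v$ for every vertex $v$. For a tree edge $uw$, from $u \in B_u$, $w \in B_w$ and $V(T_u) \cap V(T_w) \neq \emptyset$ we get $u \in B_w$ or $w \in B_u$. For a non-tree edge $uv$, the subtrees $T_u \ni u$ and $T_v \ni v$ meet, so $T_u \cup T_v$ is connected and contains the whole $u$--$v$ path $P_{uv}$ in $T$; intersecting with $P_{uv}$ yields two subpaths, one from each end, that cover $P_{uv}$ and hence overlap, so some vertex $z$ on $P_{uv}$ has $\{u,v\} \subseteq B_z$. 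Thus a small width budget forces: along every tree edge one endpoint remembers the other, and every non-tree edge is ``absorbed'' into a single bag on its tree path, with no bag overloaded. The graph must be built so that these demands cannot be met cheaply for \emph{any} spanning tree.

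The core difficulty is that the decomposer picks both $T$ and $\B$, and this is exactly why the naive candidates fail: whenever a bounded set of vertices dominates $G$ --- an apex, the branch vertices obtained by replacing each edge with a triangle, or the two-element side of a $K_{2,n}$ --- one can place those vertices in almost every bag and win with bounded width. So $G_k$ must have no small controlling set yet still be strongly ``tree-shaped''. Since Hickingbotham proved \cref{Dvorak} holds for outerplanar graphs (equivalently, simple treewidth $\le 2$), $G_k$ must also be far from outerplanar, and as it cannot have a $K_4$ minor it should be assembled from many vertex-disjoint copies of $K_{2,3}$. Concretely, I would take $G_k$ to be a \emph{complete binary tree of $K_{2,3}$'s of depth $k$}: one copy of $K_{2,3}$ for each node of the depth-$k$ binary tree, each copy glued to its parent and to its two children by identifying degree-two vertices. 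Gluing along single vertices keeps $\tw(G_k) = 2$ (and it is exactly $2$ since $K_{2,3}$ appears as a subgraph), and contracting each copy displays the depth-$k$ binary tree as a minor, so $(G_k)_k$ has unbounded pathwidth and thus --- as a graph class has bounded pathwidth if and only if it excludes some forest as a minor --- contains every tree as a minor, matching the remark after \cref{thm:main}.

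The heart of the argument is then the width lower bound: for every spanning tree $T$ of $G_k$ and every decomposition of the above form on it, some bag has at least $k+1$ vertices. I would prove this by a monovariant down root-to-leaf paths in the binary tree of gadgets. Using the structural facts, I expect that inside each gadget the two degree-two vertices leading to its children, together with the spanning-tree and subtree constraints, force one of its vertices into a bag that must also hold a ``witness'' coming up from each child's subtree, so the load strictly increases from a gadget to at least one of its children; after $k$ levels this produces a bag of size $\ge k+1$. Setting up the right potential, and --- above all --- choosing the gadget and the gluing so that \emph{no} spanning tree can shortcut or redistribute this load, is the delicate point I expect to be the main obstacle; by contrast, verifying $\tw(G_k) = 2$ exactly and carrying out the two reductions should be routine.
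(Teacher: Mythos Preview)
Your structural observation --- that for every non-tree edge $uv$ the $T$-path $P_{uv}$ lies in $T_u \cup T_v$, so some vertex of $P_{uv}$ carries at least one of $u,v$ in its bag --- is exactly the lever the paper uses. The gap is in your construction. Gluing $K_{2,3}$'s at single degree-two vertices makes every gadget a \emph{block} of $G_k$; hence any spanning tree of $G_k$ restricts to a spanning tree of each gadget, every non-tree edge and its fundamental cycle stay inside one gadget, and one can decompose block by block. Concretely, for each block let the subtrees $T_a,T_b$ of its two degree-$3$ vertices be the whole block, and let $T_c=\{c\}$ for every degree-$2$ vertex $c$. This is a valid strong decomposition: every edge of the block meets $T_a$ (and $T_b$), and each cut vertex is shared by exactly two blocks, so its bag is $\{c,a,b,a',b'\}$ of size $5$, while every non-cut bag has size at most $3$. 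Thus your $G_k$ has a strong decomposition of width at most $4$ for \emph{every} $k$, and no monovariant of the kind you describe can exist.

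The paper's counterexample is the \emph{reflected-tree} $G_r$, which is $2$-connected and so closes this escape route. Here $G_2$ is a $4$-cycle with two antipodal roots, and $G_r$ is built from two disjoint copies of $G_{r-1}$ together with two new roots $u,v$, where $u$ is joined to one root of each copy and $v$ to the other. The key lemma is purely about spanning trees: for \emph{any} spanning tree $T$ of $G_r$ there is a matching $M\subseteq E(G_r)\setminus E(T)$ of size $r-1$ whose fundamental cycles all contain a common edge of the $u$--$v$ path $P_{uv}$ in $T$. The induction is one line: $P_{uv}$ lies inside exactly one of the two copies of $G_{r-1}$ (apply the hypothesis there), while the other copy is disconnected in $T-\{u,v\}$ and therefore contributes a non-tree edge, vertex-disjoint from the first copy, whose fundamental cycle contains all of $P_{uv}$. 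Your own observation then finishes instantly: at the common vertex $x$, each matching edge deposits a distinct endpoint into $B_x$, so $\lvert B_x\rvert\ge r-1$. What you were missing is not a load that grows along a root-to-leaf path, but a family of \emph{nested} fundamental cycles forced through a single vertex --- something cut-vertex gluing can never produce.
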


Hickingbotham proved that \cref{conj:minor} implies \cref{conj:spanning} in~\cite[Lemma 7.2.1]{hickingbothamThesis}.
In \cref{sec:reduction}, we show that \cref{conj:spanning} implies \cref{conj:strong}; this crucial new step is our main contribution. 
Finally, in \cref{sec:construction}, we construct a graph that does not satisfy \cref{conj:strong}.
Hickingbotham~\cite[Theorem~7.5.1]{hickingbothamThesis} independently discovered a counterexample to \cref{conj:strong} which actually contains our counterexample.
However, we include ours since it is slightly simpler and makes the paper self-contained.
We now conclude this section by providing a short proof that \cref{conj:minor} (where $T$ is a minor) implies \cref{conj:spanning} (where $T$ is a spanning tree), for the sake of completeness.

\begin{lemma}\label{lem:minor-to-subgraph-tree}
If $G$ is a connected graph with a tree decomposition $(T, \B)$ of width $k$ such that
$T$ is a minor of $G$, then there exists a tree decomposition $(T', \B')$ of $G$ of width $k$ such that $T'$ is a spanning tree of~$G$.
\end{lemma}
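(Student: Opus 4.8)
The plan is to use a minor model of $T$ in $G$ to guide the construction of both $T'$ and $\B'$. Fix a model of $T$ in $G$, that is, a family of pairwise disjoint non-empty connected subgraphs $(M_x \colon x \in V(T))$ of $G$ (the \emph{branch sets}) such that for every edge $xy$ of $T$ there is an edge of $G$ with one end in $M_x$ and the other in $M_y$. Since $G$ is connected, I would first enlarge the branch sets so that they partition $V(G)$: while some vertex $v$ lies in no branch set, connectivity provides such a $v$ adjacent to some $M_x$, and adding $v$ to $M_x$ keeps it connected, so iterating works. Let $\phi \colon V(G) \to V(T)$ send each vertex to the index of the branch set that now contains it.

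Next I would build $T'$. For each $x \in V(T)$ fix a spanning tree $S_x$ of the connected graph $G[M_x]$, and for each edge $xy \in E(T)$ fix an edge $e_{xy} \in E(G)$ joining $M_x$ and $M_y$. Let $T'$ be the subgraph of $G$ with $V(T') = V(G)$ and $E(T') = \bigcup_{x \in V(T)} E(S_x) \cup \set{e_{xy} \colon xy \in E(T)}$. Then $T'$ is connected, since the $S_x$ are connected and the edges $e_{xy}$ glue them together along the connected tree $T$; and it has $\sum_{x}(\abs{M_x}-1) + (\abs{V(T)}-1) = \abs{V(G)}-1$ edges. A connected graph on $\abs{V(G)}$ vertices with $\abs{V(G)}-1$ edges is a tree, so $T'$ is a spanning tree of $G$.

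It remains to define $\B'$ and check the width. For each $v \in V(G)$ I would set $T'_v \coloneqq T'\bigl[\bigcup_{x \in V(T_v)} M_x\bigr]$ and let $\B'$ be the associated family of bags, i.e. $B'_x \coloneqq \set{v \colon x \in T'_v}$. Since the branch sets are disjoint and $x \in M_{\phi(x)}$, one has $x \in T'_v$ if and only if $\phi(x) \in T_v$; hence $B'_x = B_{\phi(x)}$ for every $x \in V(T')$, so every bag of $(T', \B')$ is already a bag of $(T, \B)$ and the width is still $k$. To confirm $(T', \B')$ is a tree decomposition via \cref{def:td-subtree-version}: each $T'_v$ is non-empty because $T_v$ is non-empty and branch sets are non-empty, and it is a subtree of $T'$ because $T_v$ is connected in $T$ and the trees $S_x$ (for $x \in V(T_v)$) together with the edges $e_{xy}$ (for $xy \in E(T_v)$) span and connect $\bigcup_{x \in V(T_v)} M_x$ inside the tree $T'$; and if $uv \in E(G)$, picking $x \in V(T_u) \cap V(T_v)$ gives $\emptyset \neq M_x \subseteq V(T'_u) \cap V(T'_v)$.

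The only real decision in this argument is to define the new subtrees by pulling the old ones back along $\phi$ rather than choosing them greedily: this is exactly what prevents the width from increasing, since it leaves the bags unchanged up to the projection $\phi$. Correspondingly, the single condition in \cref{def:td-subtree-version} that could a priori fail — connectivity of each $T'_v$ — is precisely the point where we use that each $T_v$ is connected and that $T'$ retains the connecting edges $e_{xy}$, so it follows immediately once the branch sets and $T'$ are set up as above.
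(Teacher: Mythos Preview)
Your proposal is correct and follows essentially the same approach as the paper's proof: both take a minor model of $T$ in $G$, enlarge the branch sets to cover $V(G)$ using connectivity, build $T'$ from spanning trees of the branch sets together with one connecting edge per edge of $T$, and define the new subtrees by pulling back the old ones along the branch-set projection so that each new bag equals an old bag. Your version simply spells out a few steps (the iterative enlargement, the edge-count argument for acyclicity, the explicit map $\phi$ and the identity $B'_x = B_{\phi(x)}$) that the paper leaves implicit.
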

\begin{proof}
    Since $T$ is a minor of $G$, there exists a collection $(Q_x \colon x \in V(T))$ of pairwise disjoint non-empty subtrees of $G$ such that, for each edge $xy \in E(T)$, there exists an edge $e_{xy} \in E(G)$ with one end in $V(Q_x)$ and the other end in $V(Q_y)$. Since $G$ is connected, we may assume that $V(G) = \cup_{x \in V(T)} V(Q_x)$. Now, let $T'$ be the spanning tree of $G$ which is obtained from $\cup_{x \in V(T)}Q_x$ by adding the edge $e_{xy}$ for all $xy \in E(T)$.

    For each $v \in V(G)$, let $T'_v$ be the subtree of $T'$ which is induced by the union of all sets $V(Q_x)$ such that $x \in T_v$. This collection of subtrees of $T'$ satisfies the conditions of \cref{def:td-subtree-version} and therefore yields a tree decomposition $(T', \B')$. Furthermore, this tree decomposition has the same width at $(T, \B)$, which completes the proof.
\end{proof}

\section{Reduction to \texorpdfstring{\cref{conj:strong}}{Conjecture 2.4}}\label{sec:reduction}

In this section we show that \cref{conj:spanning} (where $T$ is a spanning tree) implies \cref{conj:strong} (where, additionally, for every vertex $v$ of $G$, we have $v \in T_v$).

We use the following well-known fact about tree decompositions of paths. We include a proof for the sake of completeness. The bounds are not optimal; we aim for simplicity instead.

\begin{lemma}\label{lem:longpath}
    For any positive integers $h$ and $k$, if $P$ is a path with at least $(k+2)^h$ vertices and $(T, \B)$ is a tree decomposition of $P$ of width at most $k$, then $T$ contains a path of length~$h$.
\end{lemma}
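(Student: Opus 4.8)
The plan is to prove the contrapositive: if $(T,\B)$ is a tree decomposition of a path $P$ of width at most $k$ and $T$ has no path of length $h$, then $\abs{V(P)} < (k+2)^h$. The first step is to pass to a rooted tree. Since a longest path in a tree has length equal to the tree's diameter, the hypothesis says that $T$ has diameter at most $h-1$; hence, rooting $T$ at an arbitrary vertex, every vertex lies at distance at most $h-1$ from the root, so $T$ has a rooting of height at most $h-1$. It therefore suffices to prove the following statement by induction on $h'\ge 0$: if $(T,\B)$ is a tree decomposition of a path $P$ of width at most $k$ and $T$ has a rooting of height at most $h'$, then $\abs{V(P)}\le (k+2)^{h'+1}-1$. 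Applying this with $h'=h-1$ gives $\abs{V(P)}\le (k+2)^{h}-1<(k+2)^h$, as required.

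For the base case $h'=0$, the tree $T$ is a single vertex $r$, so each $T_v$ is the (necessarily nonempty) subtree on $\{r\}$, i.e.\ $B_r=V(P)$; width at most $k$ then gives $\abs{V(P)}=\abs{B_r}\le k+1=(k+2)^1-1$. For the inductive step, fix a rooting of $T$ of height at most $h'$ with root $r$, and write $S=B_r$, so $\abs{S}\le k+1$. Since $P$ is a path, $P-S$ has at most $k+2$ components $P_1,\dots,P_t$, each a subpath of $P$. The key observation is that each $P_i$ is confined to a single branch below $r$: every $v\in V(P_i)$ satisfies $v\notin B_r$, i.e.\ $r\notin T_v$, so $T_v$ lies in one component of $T-r$; and since consecutive vertices of the path $P_i$ are adjacent in $G=P$, their subtrees meet, so $\bigcup_{v\in V(P_i)}T_v$ is connected and hence lies inside the subtree $D_{c_i}$ rooted at some child $c_i$ of $r$. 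Then $(D_{c_i},(T_v\colon v\in V(P_i)))$ meets both conditions of \cref{def:td-subtree-version} and so is a tree decomposition of $P_i$, of width at most $k$, whose tree $D_{c_i}$ (rooted at $c_i$) has height at most $h'-1$. The induction hypothesis gives $\abs{V(P_i)}\le (k+2)^{h'}-1$ for each $i$, and summing over the at most $k+2$ components yields $\abs{V(P)}\le \abs{S}+\sum_i\abs{V(P_i)}\le (k+1)+(k+2)\bigl((k+2)^{h'}-1\bigr)=(k+2)^{h'+1}-1$.

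I expect the single-branch observation in the inductive step to be the only real point, and it is exactly where $P$ being a path (rather than an arbitrary graph) is used: deleting a bag of size at most $k+1$ from a path leaves at most $k+2$ pieces, which is what bounds the branching of the recursion. Everything else is routine: verifying that restricting the subtrees $T_v$ to $D_{c_i}$ preserves the two bullet points of \cref{def:td-subtree-version} (nonemptiness is automatic since each such $T_v$ already lies in $D_{c_i}$, and the intersection condition for an edge of $P_i$ is inherited from $(T,\B)$), recalling that a longest path and the diameter of a tree coincide, and noting that two vertices of $T$ at distance at least $h$ are joined by a path of length at least $h$, which contains a subpath of length exactly $h$. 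The recursion $g(h')\le (k+1)+(k+2)\,g(h'-1)$ with $g(0)=k+1$ solves cleanly to $g(h')=(k+2)^{h'+1}-1$, so there are no delicate constants to track.
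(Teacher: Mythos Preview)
Your proof is correct and follows essentially the same approach as the paper's: both root $T$, induct on the height, and use the key observation that a subpath of $P$ disjoint from the root bag $B_r$ is confined to a single component of $T-r$. The only cosmetic difference is that the paper partitions $P$ into $k+2$ equal consecutive pieces and applies pigeonhole to find one piece avoiding $B_r$, recursing on that single piece, whereas you take all components of $P-B_r$ and sum their contributions; the underlying idea and the resulting bound are the same.
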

\begin{proof}
We consider a tree decomposition $(T, \B)$ where $T$ is rooted at an arbitrary vertex ${r \in V(T)}$. The \emph{height} of $T$ is then the maximum length of a path which has $r$ as one of its ends. For fixed $k$, we prove by induction on $h$ that, under the same hypothesis, we actually obtain the following stronger conclusion: that the height of $T$ is at least~$h$. 

The base case of $h=1$ holds since $P$ has more than $k+1$ vertices and therefore $(T, \B)$ has more than one bag. So we may assume that $h>1$ and the claim holds for $h-1$. Observe that we can partition $V(P)$ into $k+2$ sets, each of which induces in $P$ a path with at least $(k+2)^{h-1}$ vertices. Since $(T, \B)$ has width at most $k$, one of these sets is disjoint from the root bag $B_r$. Thus, by the inductive hypothesis, one of the components of $T-\set{r}$, when rooted at its neighbour of $r$, has height at least $h-1$. So $T$ has height at least $h$, as desired.
\end{proof}

We use the following construction to show that \cref{conj:spanning} implies \cref{conj:strong}. Given a positive integer $k$, a graph $G$, and an arbitrary ordering of the vertices of $G$, we define a new graph denoted $\widetilde{G}$. This graph $\widetilde{G}$ is obtained from $G$ by attaching one rooted tree to each vertex of $G$; so $\widetilde{G}$ has the same treewidth as $G$ (unless $E(G) = \emptyset$). Moreover, in \cref{lem:reduction}, we prove that if $\widetilde{G}$ satisfies \cref{conj:spanning} with a tree decomposition of width $k$, then $G$ satisfies \cref{conj:strong} with a tree decomposition of width $k+1$.

The attached trees are chosen such that no two have \say{comparable} tree decompositions. More formally, given two trees $T_1$ and $T_2$, there is no tree decomposition $(T'_2, \B'_2)$ of $T_2$ of width $k$ such that $\T'_2$ is a subgraph of $T_1$, and likewise with the roles of $T_1$ and $T_2$ reversed. We do not frame the argument in this way, but it is the underlying reason our proof works. We accomplish this condition by, up to symmetry between $T_1$ and $T_2$, making height of $T_2$ much larger than the height of $T_1$, and the width of $T_1$ much larger than $\abs{V(T_2)}$. See \cref{fig:construction} for a depiction. 

With this intuition, we are ready to state the main definition.

\begin{definition}\label{dfn:construction}
Fix a positive integer $k$, a graph $G$, and an arbitrary ordering $a_1, \dotsc, a_n$ of the vertices of $G$. Then let $\widetilde{G}$ be the graph which is constructed from $G$ as follows.
\begin{itemize}
\item First define integers $2 = h_1 \ll h_2 \ll \dotsb \ll h_n$ as follows. Given $h_{j-1}$, we define $h_{j}\coloneqq (k+2)^{2h_{j-1}}+1$. Thus, by \cref{lem:longpath}, if $P$ is a path on at least $h_{j}-1$ vertices and $(T, \B)$ is a tree decomposition of $P$ of width at most $k$, then $T$ contains a path of length $2h_{j-1}$.
\item Next define integers $(k + 1)n +1= w_n \ll w_{n - 1} \ll \dotsb \ll w_1$ and corresponding rooted trees $S_n, S_{n-1}, \dotsc, S_1$ as follows. Given $w_j$, define $S_j$ to be the complete rooted $w_j$-ary tree of height $h_j$. Then, given $w_n, w_{n-1}, \dotsc, w_{j+1}$ and $S_n, S_{n-1}, \dotsc, S_{j+1}$, define
\begin{equation*}
    w_j \coloneqq (k + 1)\biggl(n + \sum_{i=j+1}^n \abs{V(S_i)}\biggr)+1.
\end{equation*}
\end{itemize}
Finally, let $\widetilde{G}$ be the graph which is obtained from the disjoint union of $G, S_1, S_2, \dotsc, S_n$ by, for each $j \in \set{1,2,\dots, n}$, identifying $a_j$ with the root of $S_j$. 
\end{definition}

\tikzstyle{labelNode}= [circle, draw, inner sep=2,outer sep=0, fill=tikzgray, minimum size=5pt]
\tikzstyle{triangle}=[fill=darkblue!30!white, draw=darkblue!50!white]
\begin{figure}[ht]
    \centering
    \begin{tikzpicture}
        \usetikzlibrary{math}
        \tikzmath{\x1 = 0; \x2 = 4; \x3 = 10;
            \h1 = 1.75; \h2 = 3; \h3 = 6;
            \w1=4; \w2=2; \w3=1;} 
        \foreach \i in {1,2,3} {   
        \node[labelNode] (a\i) at (\x\i,0) {};
        \draw[triangle] (a\i) -- (\x\i - 2, -\h\i) -- (\x\i + 2,-\h\i) -- (a\i);
        \draw[<->,latex-latex,draw=darkblue, thick] ( \x\i +1.5, 0)-- ( \x\i +1.5, -\h\i) node[midway] (h\i) {};
        \node at (\x\i,-\h\i -0.5) {\ifthenelse{\i=3}{$S_n$}{ $S_\i$}};
        \foreach \j in {0,...,\w\i} {
            \draw (\x\i - \j/4,-1) -- (a\i);
            \draw (\x\i + \j/4,-1) -- (a\i);
        }
        \draw[<->,latex-latex,draw=darkblue, thick] ( \x\i- 1/\i, -1.2) -- ( \x\i+ 1/\i, -1.2) node[midway,below] {\ifthenelse{\i=3}{$w_n$}{$w_\i$}};
       }
       \node[labelNode, label = above:{$a_1$}] at (0, 0) {};
       \node[labelNode, label = above:{$a_2$}] at (\x2, 0) {};
       \node[labelNode, label = above:{$a_n$}] at (\x3, 0) {};
        \node[label = right:{$h_1$}] at (1.3, -1) {};
        \node[label = right:{$h_2$}] at (h2) {};
        \node[label = right:{$h_n$}] at (h3) {};

    \begin{pgfonlayer}{background}
        \draw[draw=tikzgray, ultra thick] (\x3/2+\x1/2,0) ellipse (\x3/1.5 and .9);
        \node at (-1,1) {\textcolor{tikzgray}{\Large $G$}};
        \node at (\x3/2+\x2/2, 0) {\large \dots};
    \end{pgfonlayer}
    \end{tikzpicture}
    \caption{The graph $\widetilde{G}$ which is obtained from $G$ by attaching the complete $w_j$-ary tree $S_j$ of height $h_j$ to each vertex $a_j\in V(G)$.}
    \label{fig:construction}
\end{figure}
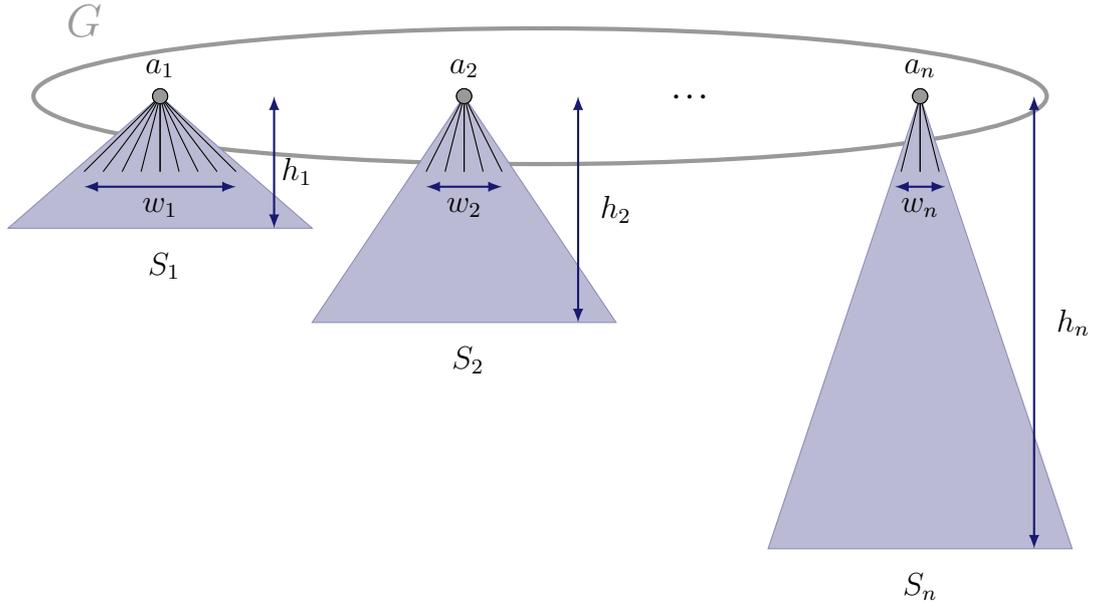

Note that the graph $\widetilde{G}$ from \cref{dfn:construction} can be obtained from $G$ by adding pendant vertices one at a time. It follows that $\tw(\widetilde{G}) = \max\left(\tw(G), 1 \right)$. The next key lemma therefore completes the reduction from \cref{conj:spanning} to \cref{conj:strong}.

\begin{lemma}\label{lem:reduction}
Let $k$ be a positive integer, let $G$ be a connected graph, let $a_1, \dotsc, a_n$ be an ordering of the vertices of $G$, and let $\widetilde{G}$ be the resulting graph constructed using \cref{dfn:construction}. Suppose that $\widetilde{G}$ has a tree decomposition $(T, \B)$ of width at most $k$ such that $T$ is a spanning tree of $\widetilde{G}$. 

Then there exists a tree decomposition $(T', \B')$ of $G$ of width at most $k+1$ such that $T'$ is a spanning tree of $G$ and for every $v \in V(G)$, we have $v \in T'_v$.
\end{lemma}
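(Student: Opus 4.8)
The plan is to take the spanning-tree decomposition $(T,\mathcal{B})$ of $\widetilde{G}$ and show that, restricted to the copy of $G$ sitting inside $\widetilde{G}$, it is ``almost'' a decomposition of the type demanded by \cref{conj:strong}: the only thing that can go wrong is that some vertex $a_j$ of $G$ fails to lie in its own bag, i.e.\ $a_j \notin B_{a_j}$, equivalently $a_j \notin (T)_{a_j}$. The heart of the argument is to rule this out using the attached trees $S_j$. First I would set up notation: write $T$ as a spanning tree of $\widetilde{G}$, so each edge of $T$ is an edge of $\widetilde{G}$; since the $S_j$'s are attached at cut vertices $a_j$, the tree $T$ restricted to $V(S_j)$ is a spanning tree of $S_j$ that contains $a_j$, and $T - (V(S_j)\setminus\{a_j\})$ is connected. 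Call $T^{(j)}$ the subtree $T[V(S_j)]$ and $T^{(0)} = T[V(G)]$; these overlap only in the vertices $a_j$, and $T$ is obtained by gluing them at those vertices.

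The key claim is: for each $j$, the vertex $a_j$ lies in $(T)_{a_j}$, \emph{or} — more usefully — at least we can modify the decomposition near $S_j$ so that it does. The mechanism is the incomparability engineered in \cref{dfn:construction}. Consider the restriction of $(T,\mathcal{B})$ to the path-like/structured tree $S_j$: since $(T,\mathcal{B})$ has width at most $k$, and $T^{(j)}$ is a spanning tree of $S_j$, the decomposition $(T^{(j)}, \mathcal{B}|_{S_j})$ is a width-$\le k$ tree decomposition of $S_j$ whose tree is a subtree of $T$. Now $S_j$ is a complete $w_j$-ary tree of height $h_j$. On the one hand, because $S_j$ contains a very long path (length roughly $h_j \gg h_{j-1}$) on at least $h_{j}-1 = (k+2)^{2h_{j-1}}$ vertices, \cref{lem:longpath} forces $T^{(j)}$ to contain a path of length $2h_{j-1}$, hence $T$ contains such a path inside $S_j$. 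On the other hand, $S_j$ has $w_j$ children at the root, and a width-$\le k$ decomposition cannot separate them too cheaply, which forces $T^{(j)}$ to have large degree somewhere near $a_j$. The tension between ``$T$ must contain a long path inside each $S_j$'' and ``$T$ must contain a high-degree vertex inside each $S_j$'' across all $j$ is exactly what the doubly-exponential/iterated choice of $h_j$ and $w_j$ is calibrated to exploit: a single spanning tree $T$ of $\widetilde{G}$ cannot simultaneously meet the demands of all the $S_j$ unless, for each $j$, the ``action'' for $S_j$ genuinely takes place inside $V(S_j)$ and in particular pins $a_j$ into $B_{a_j}$.

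Concretely, I would argue by a counting/extremal step: suppose for contradiction that $a_j \notin (T)_{a_j}$ for some $j$. Then $(T)_{a_j}$ is a subtree of $T$ avoiding $a_j$, so it lies entirely in one component of $T - a_j$; that component is either inside $S_j$ or inside $\widetilde G - (V(S_j)\setminus\{a_j\})$. Using the edge-condition of the decomposition and the fact that $a_j$ has neighbours in both parts (for $n \ge 2$, which we may assume since the $n=1$ case is trivial), one derives that one of the two sides must ``carry'' all of $a_j$'s bags, and then the width bound plus the size bound $w_n = (k+1)n+1$ and $w_j = (k+1)(n + \sum_{i>j}|V(S_i)|)+1$ gives a contradiction — essentially, $S_j$ is too big and too bushy to be decomposed with width $k$ using a tree that must also service $G$ and all the other $S_i$. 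Once we know $a_j \in (T)_{a_j}$ for every $j$, I take $T' \coloneqq T[V(G)] = T^{(0)}$, which is connected (it's $T$ with all the pendant trees $S_j\setminus a_j$ deleted) and hence a spanning tree of $G$, and define $T'_v \coloneqq (T)_v \cap V(G)$ for $v \in V(G)$. Each $(T)_v$ is a subtree of $T$; intersecting with $V(G)$ and using that each $S_j$ hangs off $T'$ at the single vertex $a_j$ shows $T'_v$ is connected (a subtree can leave $V(G)$ only through some $a_j$, and if it does it still contains $a_j \in V(G)$), and it's nonempty because if $(T)_v \subseteq V(S_j)\setminus\{a_j\}$ for some $v \in V(G)$ then $v = a_j$, contradicting $a_j \in (T)_{a_j}$; wait — rather, for $v = a_j$ we have $a_j \in (T)_{a_j}\cap V(G)$ so $T'_{a_j} \ni a_j$, and for the other vertices $v \ne a_i$ of $G$ the subtree $(T)_v$ meets $V(G)$ because $v$'s unique copy is in $V(G)$ and $v \in (T)_v$... this needs $v \in (T)_v$, which is \emph{not} given — so instead I use the edge condition: $v$ has a $T$-neighbour $u$, and $(T)_v \cap (T)_u \ne \emptyset$; pushing this around the tree $T'$ shows $T'_v \ne \emptyset$. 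The width increases by at most one because $B'_x$ for $x \in V(T')$ contains $B_x \cap V(G)$ plus possibly one extra vertex accounting for the reattachment bookkeeping, giving width $\le k+1$. I expect the main obstacle to be making the contradiction in the previous step fully rigorous: precisely quantifying how a width-$\le k$ spanning-tree decomposition of $\widetilde G$ is forced to ``use up'' structure inside each $S_j$, and showing the iterated bounds on $h_j, w_j$ really do make it impossible to have $a_j \notin B_{a_j}$; this is where the ``no two attached trees have comparable decompositions'' heuristic from the discussion before \cref{dfn:construction} has to be turned into an actual inequality.
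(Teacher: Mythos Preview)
Your proposal has a genuine gap at its core: you aim to prove that $a_j \in T_{a_j}$ for every $j$, but this need not hold, and your sketched contradiction does not go through. If $a_j \notin T_{a_j}$, then indeed $T_{a_j}$ lies in one component of $T - a_j$, but the case $T_{a_j} \subseteq S_j - a_j$ is perfectly consistent with the width bound and with all the size parameters --- nothing about $w_j$ or $h_j$ prevents it. The paper explicitly allows this case and repairs it by inserting $a_j$ into the bag $B_{a_j}$; that single insertion is precisely what costs the extra $+1$ in the width. So the target claim is false, not merely hard to prove, and the ``contradiction'' you announce cannot materialise.

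There is also a confusion in your mechanism. You work with $T^{(j)} \coloneqq T[V(S_j)]$ and speak of applying \cref{lem:longpath} to force a long path in $T^{(j)}$, or of forcing high degree in $T^{(j)}$. But since each $S_j$ is itself a tree attached at the cut vertex $a_j$, any spanning tree $T$ of $\widetilde{G}$ satisfies $T[V(S_j)] = S_j$ identically; so statements about the shape of $T^{(j)}$ are vacuous. The real question is not the shape of $T$ inside $S_j$ but rather \emph{where in $T$ the subtrees $T_v$ live} for vertices $v$ of $S_j$: a priori $T_v$ can stick out into $V(G)$ or into other $S_i$'s, and the calibrated parameters are there to rule this out for the right vertices.

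What the paper actually does is this. First, the large fan-out $w_j$ is used (via a count on bag sizes) to show that for every non-leaf $v \in S_j$ the subtree $T_v$ must meet $\bigcup_{i \le j}(S_i - a_i)$; this is exactly where $w_j > (k+1)\bigl(n + \sum_{i>j}\abs{V(S_i)}\bigr)$ is spent. Second --- and this is the step you are missing --- among the $w_j > (k+1)n$ pairwise disjoint root-to-almost-leaf paths in $S_j$ one finds a path $P$ all of whose vertices $v$ have $T_v$ disjoint from $V(G)$; then $\bigcup_{v\in P} T_v$ is a connected subtree of a single $S_i - a_i$, and \cref{lem:longpath} applied to the induced decomposition of $P$ (not of $S_j$) forces this subtree to be too tall to fit in any $S_i$ with $i < j$. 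Combined with $i \le j$ from the first step, this pins down a child $b_j$ of $a_j$ with $T_{b_j} \subseteq S_j - a_j$. From that one deduces only the conditional statement: if $a_j$ is not grounded then $T_{a_j} \subseteq S_j - a_j$ and every $G$-neighbour $a_i$ of $a_j$ is grounded with $a_j \in T_{a_i}$ --- which is exactly enough to make the single-vertex repair $B'_{a_j} \coloneqq (B_{a_j}\cap V(G)) \cup \{a_j\}$ yield a valid tree decomposition of $G$.
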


\begin{proof}
We use the notation introduced in \cref{dfn:construction}, except that we view each tree $S_j$ as an induced subgraph of $\widetilde{G}$ which is rooted at the vertex $a_j \in V(G)$. For the sake of convenience, we do not distinguish between $S_j$ and its vertex set.

In the first few claims we deduce roughly where each subtree $T_v$ (as defined in \cref{def:td-subtree-version}) lies. We say that two sets \emph{meet} if their intersection is non-empty.

\begin{claim}\label{claim:left}
    For every $j \in \set{1,2, \dotsc, n}$ and every non-leaf vertex $v$ of $S_j$, the set $T_v$ meets $(S_1 \cup \dotsb \cup S_j) \setminus V(G)$.
\end{claim}

\begin{proof}
    Consider the union of the bags of $(T, \B)$ that contain $v$. Each bag has size at most $k + 1$, so this union has size at most $(k + 1) \abs{V(T_v)}$. On the other hand, this union contains every neighbour of $v$ in $\widetilde{G}$ and so, by the choice of $w_j$,
    \begin{equation*}
        (k + 1) \abs{V(T_v)} \geq \deg_{\widetilde{G}}(v) \geq \deg_{S_j}(v) \geq w_j > (k + 1) \biggl(\abs{V(G)} + \sum_{i = j + 1}^{n} \abs{V(S_i)}\biggr).
    \end{equation*}
    In particular, $T_v$ is not a subgraph of $S_{j + 1} \cup \dotsb \cup S_n \cup G$. The claim follows.
\end{proof}

We say that a vertex $v$ of $\widetilde{G}$ is \emph{free} if $T_v$ meets $V(G)$ or, equivalently, if $v \in B_{a_1} \cup \dotsb \cup B_{a_n}$.
Otherwise, we call $v$ \emph{constrained}. Note that if $v$ is constrained, then $T_v$ is a subgraph of some $S_j - a_j$ since $T_v$ is a subtree of $\widetilde{G}$. The number of free vertices is at most
\begin{equation*}
    \abs{B_{a_1} \cup \dotsb \cup B_{a_n}} \leq (k + 1)n,
\end{equation*}
and so almost all vertices are constrained.

\begin{claim}\label{claim:children}
    For every $j \in \set{1,2, \dotsc, n}$, the vertex $a_j$ has a child $b_j$ in $S_j$ such that $T_{b_j}$ is a subgraph of $S_j - a_j$.
\end{claim}

\begin{proof}
    As $S_j$ is a complete $w_j$-ary tree of height $h_j$, there are $w_j$ vertex-disjoint paths that start at the children of $a_j$ and end at parents of leaves of $S_j$. Since $w_j > (k + 1)n$, at least one of these paths contains no free vertices -- call this path $P$. Let $T_P = \cup_{v \in V(P)} T_v$. So $T_P$ is a subtree of $\widetilde{G}$. Each $v \in V(P)$ is constrained, and so for each $v$ there is an $i$ such that $T_v$ is a subgraph of $S_i - a_i$. But, as $T_P$ is connected and there is no edge between different $S_i - a_i$, this $i$ must be the same for all $v \in V(P)$. That is, there is some $i$ such that $T_P$ is a subgraph of $S_i - a_i$. Let $b_j$ be the child of $a_j$ that is a vertex of $P$ (since $h_1 \geq 2$, such a $b_j$ exists).

    By \cref{claim:left}, we have that $T_{b_j}$ meets $(S_1 - a_1) \cup \dotsb \cup (S_j - a_j)$. Since $T_{b_j}$ is a subgraph of $T_P$, we have $i \leq j$. Next focus on the tree decomposition $(T_P, \B_P)$ of $P$ where $\B_P$ is $\B$ restricted to the vertices of $P$. This tree decomposition has width at most $k$. The path $P$ contains $h_j - 1$ vertices and so, by the choice of $h_j$, the tree $T_P$ must contain a path of length at least $2h_{j - 1}$. However, $T_P$ is a subgraph of $S_i - a_i$ whose longest paths have length less than $2h_i$. In particular, this implies that $h_i > h_{j - 1}$ and so $i \geq j$. Thus $i = j$ and $b_j$ is as required.
\end{proof}

We say that a vertex $a_j \in V(G)$ is \emph{grounded} if $T_{a_j}$ contains $a_j$.

\begin{claim}\label{claim:grounded}
    If a vertex $a_j \in V(G)$ is not grounded, then $T_{a_j}$ is a subgraph of $S_j - a_j$ and every neighbour $a_i \in V(G)$ of $a_j$ is both grounded and satisfies ${a_j} \in T_{a_i}$.
\end{claim}

\begin{proof}
    Suppose that $a_j \in V(G)$ is not grounded. Then $a_j \notin T_{a_j}$. Let $b_j$ be the child of $a_j$ given by \cref{claim:children}. As $a_j$ and $b_j$ are adjacent, $T_{a_j}$ meets $T_{b_j}$. But $T_{b_j}$ is a subgraph of $S_j - a_j$, and so $T_{a_j}$ meets $S_j - a_j$. However, $T_{a_j}$ is connected and does not contain $a_j$, so $T_{a_j}$ must be a subgraph of $S_j - a_j$.

    Let $a_i \in V(G)$ be a neighbour of $a_j$. If $a_i$ is not grounded, then $T_{a_i}$ is a subgraph of $S_i - a_i$. But then $T_{a_i}$ and $T_{a_j}$ do not meet, which is impossible as $a_i$ and $a_j$ are adjacent. Thus $a_i$ is grounded. Now $T_{a_i}$ and $T_{a_j}$ meet and so $T_{a_i}$ meets $S_j - a_j$. So, since $T_{a_i}$ is connected, $T_{a_i}$ contains $a_j$. 
\end{proof}

We now define a tree decomposition of $G$ which satisfies \cref{lem:reduction}. First, let $T'$ be the subgraph of $T$ induced by $V(G)$; notice that $T'$ is a spanning tree of $G$ since $T$ is a spanning tree of $\widetilde{G}$. Next, delete all bags $B_x$ where $x \notin V(T')$ and delete all vertices of $\widetilde{G}$ that are not vertices of $G$. Finally, if a vertex $a_j$ is not grounded, then add $a_j$ to the bag $B_{a_j}$. Call the resulting collection of bags $\B' = (B'_{a_j})_{1 \leq j \leq n}$. We claim that $(T', \B')$ is a tree decomposition of $G$. This completes the proof of \cref{lem:reduction} since it is clear that $(T', \B')$ has width at most $k + 1$, that $T'$ is a spanning tree of $G$, and that for every $a_j \in V(G)$, we have $a_j \in T'_{a_j}$. 

Notice that if a vertex $a_j \in V(G)$ is grounded, then $T'_{a_j}$ is just the induced subgraph of $T_{a_j}$ restricted to $V(G)$; so $T'_{a_j}$ is still connected. Likewise, if $a_j$ is not grounded, then by \cref{claim:grounded}, $T'_{a_j} = \set{a_j}$ is connected. We are left to check that for every edge $a_i a_j \in E(G)$, the subtrees $T'_{a_i}$ and $T'_{a_j}$ meet. First suppose that $a_j$ is not grounded. Then, by \cref{claim:grounded}, $T'_{a_i}$ and $T'_{a_j}$ both contain the vertex $a_j$. The case that $a_i$ is not grounded is symmetric, so we may assume that both $a_i$ and $a_j$ are grounded. As $a_i$ and $a_j$ are adjacent in $\widetilde{G}$, the trees $T_{a_i}$ and $T_{a_j}$ meet in $\widetilde{G}$. Let $\ell \in \set{1,2,\dotsc, n}$ be such that they meet in $S_{\ell}$. Now $T_{a_i}$ contains $a_i$ and is connected and $T_{a_j}$ contains $a_j$ and is connected, so $T_{a_i}$ and $T_{a_j}$ both contain $a_{\ell}$. So both $T'_{a_i}$ and $T'_{a_j}$ contain $a_{\ell}$, as required. This completes the proof of \cref{lem:reduction}.
\end{proof}

\section{Construction}\label{sec:construction}
In this section we disprove \cref{conj:strong} and then combine the previous reductions in order to prove \cref{thm:main}. 

We begin by defining the relevant graphs. Then we prove that they are counterexamples in Lemmas~\ref{lem:refl-tree-intersect} and~\ref{lem:construction}.

\begin{definition}
    The \emph{first reflected-tree}, which we denote by $G_1$, is the singleton graph with exactly one vertex and no edges. We call the vertex of $G_1$ its \emph{root} vertex. Then, for any positive integer $r \geq 2$, the \emph{$r$-th reflected-tree} $G_r$ is constructed recursively as follows:
    \begin{itemize}

        \item Let $H$ and $H'$ be two disjoint copies of $G_{r-1}$, and let $u$ and $v$ be two new vertices, which we call the \emph{root} vertices of $G_r$. To construct $G_r$, we start with $H$ and $H'$, then make $u$ adjacent to a root vertex of $H$ and a root vertex of $H'$. Finally, we make $v$ adjacent to the remaining root vertex of $H$ and the remaining root vertex of $H'$. See \cref{reflected-trees} for a depiction.
    \end{itemize}
\end{definition}

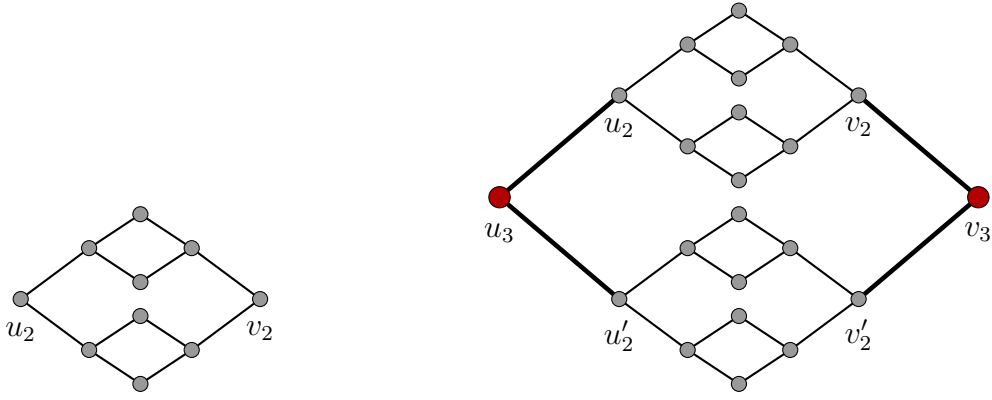
\begin{figure}
\centering
\begin{tikzpicture}[scale = .9, every node/.style={MyNode}]
    \def \r {2}
    \def \start {90}
    \def \cnt {0.75} 
    \node[label=below:{$u_2$},draw=black](X2) at (-1.75,0) {};
    \node[label=below:{$v_2$},draw=black](Z2) at (1.75,0) {}; 
    
    \node[draw=black] (X0) at (-0.75,0+\cnt) {} ;
    \node[draw=black](Y0) at (0, 0.5+\cnt) {};
    \node[draw=black](Y1) at (0, -0.5+\cnt) {};
    \node[draw=black](Z0) at (0.75,0+\cnt) {} ;

    \node[draw=black](X1) at (-0.75,0-\cnt) {} ;
    \node[draw=black](Y2) at (0, 0.5-\cnt) {};
    \node[draw=black](Y3) at (0, -0.5-\cnt) {};
    \node[draw=black](Z1) at (0.75,0-\cnt) {} ;

    \draw[thick] (X0) -- (Y0);
    \draw[thick] (X0) -- (Y1);
    \draw[thick] (X0) -- (X2);
    
    \draw[thick] (X1) -- (Y2);
    \draw[thick] (X1) -- (Y3);
    \draw[thick] (X1) -- (X2);
    
    \draw[thick] (Z0) -- (Y0);
    \draw[thick] (Z0) -- (Y1);
    \draw[thick] (Z0) -- (Z2);

    \draw[thick] (Z1) -- (Y2);
    \draw[thick] (Z1) -- (Y3);
    \draw[thick] (Z1) -- (Z2);
\end{tikzpicture}\hskip2.5cm
\begin{tikzpicture}[scale = .9, every node/.style={MyNode}]
    \def \r {2}
    \def \start {90}
    \def \cnt {0.75} 
    \def \cnst {1.5} 
     \node[label=below:{$u_3$}, fill=tikzred, minimum size=8pt] (X3) at (-3.5,0) {};
     \node[label=below:{$v_3$}, fill=tikzred, minimum size=8pt] (Z3) at (3.5,0) {};

    \node[label=below:{$u_2$},draw=black](X2) at (-1.75,0+\cnst) {};
    \node[label=below:{$v_2$},draw=black](Z2) at (1.75,0+\cnst) {}; 
    \node[draw=black](X0) at (-0.75,0+\cnt+\cnst) {} ;
    \node[draw=black](Y0) at (0, 0.5+\cnt+\cnst) {};
    \node[draw=black](Y1) at (0, -0.5+\cnt+\cnst) {};
    \node[draw=black](Z0) at (0.75,0+\cnt+\cnst) {} ;
    \node[draw=black](X1) at (-0.75,0-\cnt+\cnst) {} ;
    \node[draw=black](Y2) at (0, 0.5-\cnt+\cnst) {};
    \node[draw=black](Y3) at (0, -0.5-\cnt+\cnst) {};
    \node[draw=black](Z1) at (0.75,0-\cnt+\cnst) {} ;
    
    \node[label=below:{$u_2'$},draw=black](X21) at (-1.75,0-\cnst) {};
    \node[label=below:{$v_2'$},draw=black](Z21) at (1.75,0-\cnst) {}; 
    \node[draw=black](X01) at (-0.75,0+\cnt-\cnst) {} ;
    \node[draw=black](Y01) at (0, 0.5+\cnt-\cnst) {};
    \node[draw=black](Y11) at (0, -0.5+\cnt-\cnst) {};
    \node[draw=black](Z01) at (0.75,0+\cnt-\cnst) {} ;
    \node[draw=black](X11) at (-0.75,0-\cnt-\cnst) {} ;
    \node[draw=black](Y21) at (0, 0.5-\cnt-\cnst) {};
    \node[draw=black](Y31) at (0, -0.5-\cnt-\cnst) {};
    \node[draw=black](Z11) at (0.75,0-\cnt-\cnst) {} ;

    \draw[thick] (X0) -- (Y0);
    \draw[thick] (X0) -- (Y1);
    \draw[thick] (X0) -- (X2);
    \draw[thick] (X1) -- (Y2);
    \draw[thick] (X1) -- (Y3);
    \draw[thick] (X1) -- (X2);
    \draw[thick] (Z0) -- (Y0);
    \draw[thick] (Z0) -- (Y1);
    \draw[thick] (Z0) -- (Z2);
    \draw[thick] (Z1) -- (Y2);
    \draw[thick] (Z1) -- (Y3);
    \draw[thick] (Z1) -- (Z2);

    \draw[thick] (X01) -- (Y01);
    \draw[thick] (X01) -- (Y11);
    \draw[thick] (X01) -- (X21);
    \draw[thick] (X11) -- (Y21);
    \draw[thick] (X11) -- (Y31);
    \draw[thick] (X11) -- (X21);
    \draw[thick] (Z01) -- (Y01);
    \draw[thick] (Z01) -- (Y11);
    \draw[thick] (Z01) -- (Z21);
    \draw[thick] (Z11) -- (Y21);
    \draw[thick] (Z11) -- (Y31);
    \draw[thick] (Z11) -- (Z21);

    \draw[draw = black,line width=1.75pt] (Z3) -- (Z2);
    \draw[draw = black,line width=1.75pt] (Z3) -- (Z21);
    \draw[draw = black, line width=1.75pt] (X3) -- (X2);
    \draw[draw = black, line width=1.75pt] (X3) -- (X21);
    
\end{tikzpicture}
\caption{The $4$th reflected-tree $G_4$ (right, with root vertices larger and in red) being constructed from the $3$rd reflected-tree $G_3$ (left).}
\label{reflected-trees}
\end{figure}

Now we prove a lemma about the spanning trees of the reflected-tree. Whenever $T$ is a spanning tree of a graph $G$, we denote the fundamental cycle of an edge $e\in E(G)\setminus E(T)$ with respect to $T$ by $C^e_T$; thus $C^e_T$ is the unique cycle in the graph obtained from $T$ by adding $e$. 
\begin{lemma}\label{lem:refl-tree-intersect}
    For any integer $r\ge 2$ and any spanning tree $T$ of $G_r$, there is a matching $M\subseteq E(G_r)\setminus E(T)$ of size $r-1$ such that
    $$\bigcap_{e\in M} V\left(C^e_T\right) \neq \varnothing.$$
\end{lemma}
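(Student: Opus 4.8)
The plan is to prove, by induction on $r$, the following strengthening, from which the lemma follows by simply forgetting the extra information: \emph{for every spanning tree $T$ of $G_r$ there is a matching $M\subseteq E(G_r)\setminus E(T)$ of size $r-1$ together with a vertex $w$ lying on the (unique) $T$-path between the two root vertices of $G_r$, such that $w\in V(C^e_T)$ for every $e\in M$.} Carrying the fact that $w$ can be placed on the root-to-root $T$-path is exactly what makes the induction close, because the fundamental cycle of the single new non-tree edge we add at each step will always contain the root-to-root $T$-path of one of the two smaller copies, so the vertex handed back by the recursion automatically lies on it. The base case $r=2$ is immediate: $G_2$ is a $4$-cycle through its two roots, so every spanning tree $T$ is a Hamilton path, the unique non-tree edge $f$ satisfies $V(C^f_T)=V(G_2)$, and any vertex on the $T$-path between the two roots works with $M=\{f\}$.

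For the inductive step ($r\ge 3$), write $G_r$ as two disjoint copies $H,H'$ of $G_{r-1}$ together with the roots $u,v$ and the four connector edges $ua_1,ua_2,vb_1,vb_2$, where $a_1,b_1$ are the roots of $H$ and $a_2,b_2$ are the roots of $H'$. Comparing $\abs{E(T)}=\abs{V(G_r)}-1$ with the at most $\abs{V(H)}-1$ and at most $\abs{V(H')}-1$ tree edges that can lie inside each copy shows that at least three connector edges lie in $T$, and yields a dichotomy. Either exactly three do, in which case both $T\cap E(H)$ and $T\cap E(H')$ are spanning trees of their copies; or all four do, in which case exactly one of $T\cap E(H), T\cap E(H')$ is a spanning tree and the other is a forest with exactly two components, one containing each of the two roots of that copy (otherwise $T$ would contain a cycle through $u$ and $v$). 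Using the automorphisms of $G_r$ that swap the two copies and that swap the two roots, we may normalise which connector is missing, respectively which copy carries a spanning tree.

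In the three-connector case, say $ua_1\notin T$; then $u$ is a leaf of $T$, and tracing the unique $T$-path from $u$ to $a_1$ shows that $C^{ua_1}_T$ consists of $u$, $v$, the $T$-path inside $H$ from $a_1$ to $b_1$, and the $T$-path inside $H'$ from $a_2$ to $b_2$. Apply the inductive hypothesis to $H'$ with its spanning tree $T\cap E(H')$ to get a matching $M'$ of size $r-2$ inside $H'$ and a vertex $w$ on the $T$-path inside $H'$ from $a_2$ to $b_2$ with $w\in V(C^e_T)$ for all $e\in M'$; here the fundamental cycles computed in $T\cap E(H')$ coincide with those in $T$ since $T\cap E(H')$ is connected. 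Then $M:=M'\cup\{ua_1\}$ is a matching (its new edge meets only $u$ and $a_1$, both outside $V(H')$), $w$ lies on $C^{ua_1}_T$, and $w$ lies on the $T$-path from $u$ to $v$, which is $u$, then the $H'$-path from $a_2$ to $b_2$, then $v$. In the four-connector case, say $T\cap E(H)$ is a spanning tree of $H$ and the two components of $T\cap E(H')$ are $A\ni a_2$ and $B\ni b_2$; since $G_{r-1}$ is connected there is an edge $f$ of $H'$ with one end in $A$ and the other in $B$, and $f\notin T$. Tracing the $T$-path between the ends of $f$ shows $C^f_T$ contains $u$, $v$ and the $T$-path inside $H$ from $a_1$ to $b_1$. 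Apply the inductive hypothesis to $H$ with $T\cap E(H)$ to get a matching $M''$ of size $r-2$ inside $H$ and a vertex $w$ on the $H$-path from $a_1$ to $b_1$; then $M:=M''\cup\{f\}$ is a matching, $w$ lies on $C^f_T$, and $w$ lies on the $T$-path from $u$ to $v$, which runs through $H$ from $a_1$ to $b_1$.

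The one genuine difficulty, and the reason for the strengthened statement, is guaranteeing that the common vertex produced by the recursion lands on the fundamental cycle of the freshly added edge; this is precisely what forces us to recurse into whichever copy the new edge avoids and to maintain the invariant that $w$ lies on the root-to-root $T$-path. Everything else — the edge count behind the dichotomy, verifying that $M$ remains a matching, verifying that the restricted fundamental cycles agree with the global ones, and checking that the normalising symmetries are legitimate — is routine.
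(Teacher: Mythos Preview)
Your proof is correct and follows essentially the same approach as the paper's. Both arguments strengthen the statement so that the common point lies on the $T$-path $P_{uv}$ between the two roots, then induct by recursing into the copy of $G_{r-1}$ on which $T$ restricts to a spanning tree and adding one non-tree edge from the other side; the only cosmetic differences are that the paper phrases the invariant in terms of a common \emph{edge} of the fundamental cycles on $P_{uv}$ rather than a common vertex, and that the paper unifies your three-connector and four-connector cases into a single step by checking which of $T[V(H)\cup\{u,v\}]$ and $T[V(H')\cup\{u,v\}]$ is connected.
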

\begin{proof}
    Let $u$ and $v$ be the root vertices of $G_r$, and denote the path between them in $T$ by $P_{uv}$. Under the same conditions, we prove the following stronger outcome holds by induction:
    $$\bigcap_{e\in M} E\left(C^e_T\right) \cap E(P_{uv}) \neq \varnothing,$$
    for some matching $M\subseteq E(G_r)\setminus E(T)$ of size $r-1$.
    For the base case of $r=2$, the graph $G_r$ is a cycle on four vertices; then any spanning tree $T$ of $G_2$ is a path on four vertices, and we can take $M$ to be the $1$-edge matching $E(G_2)\setminus E(T)$.  
    
    Next, we may assume that $r > 2$ and the claim holds for $r-1$. 
    By definition, $G _r - \{u, v\}$ has exactly two connected components both of which are isomorphic to $G_{r-1}$.
    We denote these components by $H$ and $H'$. Exactly one of $T_H\coloneqq T[V(H)\cup\{u,v\}]$ and $T_{H'}\coloneqq T[V(H')\cup\{u,v\}]$ is connected in $T$; without loss of generality, we assume that $T_H$ is connected in $T$. We can apply the inductive hypothesis on $T[V(H)]$, which is a spanning tree of $H$, to find a matching $M_H\subseteq E(H)\setminus E(T[V(H)])$ of size $r-2$ with 
    $\bigcap_{e\in M_H} E\left(C^e_T\right) \cap E(P_{uv}-\{u,v\}) \neq \varnothing$. The other subgraph $T_{H'}$ is not connected. In fact, it contains exactly two components: one containing $u$, and the other containing $v$. Thus there exists an edge $e' \in E(G_r)\setminus E(T)$ with one end in each of these two components of $T_{H'}$. Observe that $e'$ lies in $G[V(H')\cup\{u,v\}]$ which is vertex-disjoint from $H$. Thus $M_H \cup \{e'\}$ is a matching since $M_H \subseteq H$.

    For convenience, let us define $M\coloneqq M_H \cup \{e'\}$. $M$ is a matching of size $r-1$, and we have $E(P_{uv}) \subseteq E(C^{e'}_T)$. From here, it follows that 
    $$\bigcap_{e\in M} E\left(C^e_T\right) \cap E(P_{uv}) \neq \varnothing.$$

    Thus $M$ is our desired matching.
\end{proof}

We are now ready to prove the following lemma, which shows that reflected-trees are a counterexample to \cref{conj:strong}.

\begin{lemma}\label{lem:construction}
For every $k \in \mathbb{N}$, if $(T, \B)$ is a tree decomposition of $G_{k+2}$ such that $T$ is a spanning tree of $G_{k+2}$ and, for every $v \in V(G_{k+2})$, we have $v \in T_v$, then the width of $(T, \B)$ is at least~$k$.
\end{lemma}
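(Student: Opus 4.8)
The plan is to combine \cref{lem:refl-tree-intersect} with the defining properties of a tree decomposition. Set $r = k+2$, so that $G_r = G_{k+2}$, and let $(T,\B)$ be a tree decomposition as in the statement. Apply \cref{lem:refl-tree-intersect} to the spanning tree $T$ of $G_r$: this yields a matching $M \subseteq E(G_r)\setminus E(T)$ of size $r - 1 = k+1$ together with a vertex $x \in \bigcap_{e\in M} V(C^e_T)$. I claim that $x$ lies in at least $k+1$ of the subtrees $T_v$; since the width of $(T,\B)$ equals $\max_{x'} \abs{\set{v : x' \in T_v}} - 1$, this immediately gives width at least $k$.

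The key step is an observation about a single non-tree edge. Let $e = ab \in M$. Since $ab \in E(G_r)$, the second condition of \cref{def:td-subtree-version} gives $V(T_a) \cap V(T_b) \ne \emptyset$, so $T_a \cup T_b$ is a connected subgraph of $T$. By the ``grounded'' hypothesis we have $a \in T_a$ and $b \in T_b$, so $T_a \cup T_b$ contains both $a$ and $b$; as $T$ is a tree, it therefore contains the unique $a$--$b$ path in $T$, whose vertex set is exactly $V(C^e_T)$. Hence $V(C^e_T) \subseteq V(T_a) \cup V(T_b)$.

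Applying this with the common vertex $x$: for every $e = ab \in M$ we have $x \in V(T_a)$ or $x \in V(T_b)$, so we may choose an endpoint $\phi(e)$ of $e$ with $x \in T_{\phi(e)}$. Because $M$ is a matching, its edges are pairwise vertex-disjoint, so $e \mapsto \phi(e)$ is injective and $\set{\phi(e) : e \in M}$ is a set of $k+1$ distinct vertices of $G_{k+2}$, each of whose subtrees contains $x$. Thus $\abs{\set{v \in V(G_{k+2}) : x \in T_v}} \ge k+1$, and the width of $(T,\B)$ is at least $k$.

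I do not anticipate a genuine obstacle here: the real content is carried by \cref{lem:refl-tree-intersect}, and what remains is a short unpacking of the tree-decomposition axioms. The one place to be careful is the middle step — verifying that $T_a \cup T_b$ is connected and contains both $a$ and $b$, hence the whole fundamental cycle. This is exactly where the hypothesis $v \in T_v$ is used: without it the subtrees $T_a$ and $T_b$ could meet far from $a$ and $b$ and would not be forced to cover $V(C^e_T)$, so the argument would break down (as indeed it must, since the conclusion fails for \cref{conj:spanning} without the grounded condition).
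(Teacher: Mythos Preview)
Your proposal is correct and follows essentially the same argument as the paper's proof: apply \cref{lem:refl-tree-intersect} to obtain the matching $M$ and the common vertex $x$, then use the grounded hypothesis together with the tree-decomposition axioms to show that for each $e=ab\in M$ the union $T_a\cup T_b$ covers the $a$--$b$ path in $T$ and hence contains $x$, yielding $\abs{B_x}\ge k+1$. The only cosmetic differences are your explicit choice function $\phi$ and the phrasing of the covering step; the mathematical content is identical.
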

\begin{proof}
We begin by finding a matching $M\coloneqq \{u_1v_1,\ldots, u_{k+1}v_{k+1}\}\subseteq E(G_{k+2})\setminus E(T)$ of size $k+1$ satisfying the properties in \Cref{lem:refl-tree-intersect}. Let $x\in \bigcap_{e\in M} V\left(C^e_T\right)$. By construction, $x$ is in the path $P_{u_iv_i}$ between $u_i$ and $v_i$ in $T$ for every $i\in\{1,\ldots,k+1\}$. From \Cref{def:td-subtree-version},  the trees $T_{u_i}$ and $T_{v_i}$ intersect; furthermore, since $T_{u_i}$ and $T_{v_i}$ are connected in $T$, with $u_i\in V(T_{u_i})$ and $v_i\in V(T_{v_i})$, we find that every vertex of $P_{u_iv_i}$ is in $V(T_{u_i}) \cup V(T_{v_i})$. As a result, $x\in V(T_{u_i})\cup V(T_{v_i})$. That is, $u_i\in B_x$ or $v_i\in B_x$ for all $i\in\{1,\ldots,k+1\}$. Since $M$ is a matching, we have that $|B_x| \ge k+1$ and the width of $(T,\mathcal{B})$ is at least $k$.
\end{proof}

We are now ready to prove the main theorem, which is restated below for convenience.

\mainThm*
\begin{proof}
For convenience, we fix an integer $k \geq 2$. Now consider the $(k+3)$-rd reflected-tree $G_{k+3}$. Let $a_1, \dotsc, a_n$ be an arbitrary ordering of $V(G_{k+3})$, and let $\widetilde{G}_{k+3}$ be the graph obtained from the integer $k-1$, the graph $G_{k+3}$, and the ordering $a_1, \dotsc, a_n$ by applying \cref{dfn:construction}. 

We now prove that $\widetilde{G}_{k+3}$ satisfies the conditions of \cref{thm:main}.
First, recall that $\widetilde{G}_{k+3}$ has treewidth equal to $\max(\tw(G_{k+3}), 1)$. Moreover, $\tw(G_{k+3})=2$ since $G_{k+3}$ is series parallel and not a tree. Thus $\widetilde{G}_{k+3}$ is a connected graph of treewidth~$2$, as desired.

Next, suppose towards a contradiction that $\widetilde{G}_{k+3}$ has a tree decomposition $(T, \B)$ of width at most $k-1$ such that $T$ is a minor of $\widetilde{G}_{k+3}$. Since $\widetilde{G}_{k+3}$ is connected, \cref{lem:minor-to-subgraph-tree} says that $\widetilde{G}_{k+3}$ has a tree decomposition $(T', \B')$ of width at most $k-1$ such that $T'$ is a spanning tree of $\widetilde{G}_{k+3}$. Thus, since $G_{k+3}$ is connected, \cref{lem:reduction} says that $G_{k+3}$ has a tree decomposition $(T'', \B'')$ of width at most $k$ such that $T''$ is a spanning tree of $G_{k+3}$ and for every $v \in V(G_{k+3})$, we have $v \in T''_v$. However, \cref{lem:construction} also says that $(T'', \B'')$ has width at least $k+1$. This contradiction completes the proof of \cref{thm:main}.
\end{proof}

\section*{Acknowledgements}
The authors would like to thank Sang-il Oum for his suggestions about where to look for a counterexample to \cref{conj:strong}; Zden\v{e}k Dvo\v{r}\'{a}k for sharing the original problem and helpful comments which improved the paper; David Wood for pointing out all of the progress in~\cite{hickingbothamThesis}, and Sophie Spirkl for her helpful feedback on our proof that \cref{conj:strong} is false. 
Aristotelis Chaniotis, Linda Cook, Sepehr Hajebi, and Sophie Spirkl asked about a counterexample to a strengthening of \cref{conj:strong} at the Barbados Graph Theory Workshop in December 2022, which started this whole work.
As such, the authors would like to thank Aristotelis Chaniotis, Sepehr Hajebi, Sophie Spirkl, and the organizers of the Barbados Graph Theory Workshop -- Sergey Norin, Paul Seymour, and David Wood.

{
\fontsize{11pt}{12pt}
\selectfont
	
\newcommand{\etalchar}[1]{$^{#1}$}

}
\end{document}